\newtheorem{theorem}{Theorem}[section] 
\newtheorem{lemma}[theorem]{Lemma}     
\theoremstyle{definition}
\newtheorem{definition}[theorem]{Definition}
\newtheorem{example}[theorem]{Example}
\tikzstyle{square} = [shape=regular polygon, regular polygon sides=4, minimum size=1cm, draw, inner sep=0, anchor=south, fill=gray!30]
\tikzstyle{squared} = [shape=regular polygon, regular polygon sides=4, minimum size=1cm, draw, inner sep=0, anchor=south, fill=gray!60]
\tikzstyle{squaredd} = [shape=regular polygon, pattern={crosshatch}, regular polygon sides=4, minimum size=1cm, draw, inner sep=0, anchor=south]
\tikzstyle{squareddd} = [shape=regular polygon, regular polygon sides=4, minimum size=1cm, draw, inner sep=0, anchor=south, fill=gray!120]%
\newcommand{\C}{{\mathbb{C}}}
\newcommand{\Z}{{\mathbb{Z}}}
\newcommand{\Q}{{\mathbb{Q}}}
\newcommand{\N}{{\mathbb{N}}}
\begin{document}

\title{The Minimal Euclidean Function on the Gaussian Integers}
\author{Hester Graves}
\address{Institute for Defense Analyses\\ Center for Computing Sciences\\ Bowie, Maryland 20715, USA}
\email{hkgrave@super.org}

\maketitle

\begin{abstract}  In 1949, Motzkin proved that every Euclidean domain $R$ has a minimal Euclidean function, $\phi_R$.  
He showed that when $R = \Z$, the minimal function is $\phi_{\Z}(x) = \lfloor \log_2 |x| \rfloor$.  
For over seventy years, $\phi_{\Z}$ has been the only example of an explictly-computed minimal function in a number field.
We give the first explicitly-computed minimal function in a non-trivial number field, $\phi_{\Z[i]}$,  
which computes the length of the shortest possible $(1+i)$-ary expansion of any Gaussian integer. 
We also present an algorithm that uses $\phi_{\Z[i]}$ to compute minimal $(1+i)$-ary expansions of Gaussian integers. 
We solve these problems using only elementary methods.
\end{abstract}

\section{Introduction}

Given $a, b \in \mathbb{N}_0 = \{ 0, 1, 2, 3, \ldots\}$, $b \neq 0$, we can divide $a$ by $b$ to get a quotient $q$ and a remainder $r$, 
which allows us to write $a =qb +r$, with $r <b$.   
If $r > \frac{b}{2}$, we may rewrite $a$ as $(q+1)b + (r-b)$, so there exist some $q', r' \in \Z= \{ 0, \pm 1, \pm 2, \pm 3, \ldots \}$ such that
$a = q' b + r'$ and $|r'| \leq \frac{b}{2}$. 

An integral domain $R$ is a commutative ring with a multiplicative identity where $ab =0$ implies either $a =0$ or $b =0$.  
A domain $R$ is called a \textbf{Euclidean domain} if there is a function $f: R \setminus 0 \rightarrow \N_0$ such that for any $a, b \in R \setminus 0$, there exist
$q, r \in R$ such that $a = qb +r$ and either $r=0$ or $f(r) < f(b)$.  The function $f$ is a \textbf{Euclidean function} for $R$.  
The first paragraph shows that $f(x) = |x|$ is a Euclidean function for $\Z$, and that $f(x) = \lfloor \log_2 |x|  \rfloor$ is a strictly smaller one.

In 1949 Motzkin \cite{Motzkin} proved that all Euclidean domains have a \textbf{minimal Euclidean function}, 
the point-wise minimum of the domain's Euclidean functions, and that $\phi_{\Z}(x) = \lfloor \log_2 |x| \rfloor$.  
In the last seventy years, there have been no minimal functions computed for any non-trivial rings of integers of number fields.  
This paper's purpose is to ameliorate the situation by computing the minimal function for the Gaussian integers,
$\Z[i] = \{ a + b i: a, b \in \Z\} \subset \Q(i) \subset \C$. 

H.W. Lenstra Jr. lay the foundation for Theorem \ref{alg_theo} in 1974 when he proved that  
  \begin{equation}\label{Lenstra_result}
  \phi_{\Z[i]}^{-1}([0,n]) = B_n = \left \{ \sum_{j=0}^n v_j ( 1 + i)^j, v_j \in \{0, \pm 1, \pm i\} \right \}.
  \end{equation}
This is a direct analogue of Motzkin's formula, $\phi_{\Z} (x) = \log_2 |x|$, as 
 Lenstra's formula is one less than the minimal number of digits needed to write $a+bi$ in base $(1+i)$,
 and $\phi_{\Z}(x)$ is one less than the least number of digits necessary to write $x$ in base $2$.  
 One can read his original proof in \cite{Lenstra}, or an elementary, geometric proof in the accompanying paper \cite{Graves}.
 
 Unfortunately, the representation of $B_n$ in equation \ref{Lenstra_result} makes computing $\phi_{\Z[i]}$ tricky, even on the integers.
 Because $5 = 4 +1 = -(1+i)^4 +1$, it is clear that $5 \in B_4$, but 
 $5$ can also be written as 
 \[(2 + 2i) + 2 + (1-i) -i = -i \left ( (1+i)^3 + (1+i)^2 + (1+i) + 1 \right ) \in B_3,\]
 so finding the smallest $B_n$ an integer belongs to is not obvious.  To confirm that $\phi_{\Z[i]}(5)=3$, we would have to compute all the elements of $B_2$, and check that $5$ is not one of them.  
 With the following definition and subsequent theorem, however, computing $\phi_{\Z[i]}$ and finding the least $B_n$ is surprisingly easy.
 
 \begin{definition}  For $n \geq 0$, we define the sequence 
\begin{equation*}
w_n = 
\begin{cases}
2^{k+1} + 2^k & \text{if}\ n=2k\\
2^{k+2} & \text{if}\ n = 2k +1.
\end{cases}
\end{equation*}
\end{definition}
If $n \geq 0$, then $w_{n+2} = 2 w_n$ and if $k \geq 1$, then $w_{2k+1} - w_{2k} = w_{2k} - w_{2k-1} = 2^k$.  
 
 \begin{theorem}\label{alg_theo} Suppose that $a+bi \in \Z[i]$, that $m$ is the smallest integer such that
 $\frac{\max(|a|,|b|)}{2^j} \leq w_m -2$, and that $j$ is the largest integer such that $2^j$ divides both $a$ and $b$. Then  
 \begin{equation*}
\phi_{\Z[i]}(a+bi) = 
\begin{cases}
m+2j  & \text{if}\ \frac{|a| + |b|}{2^j} \leq w_{m+1}-3\\
m+2j+1 & \text{if}\ \frac{|a| + |b|}{2^j} > w_{m+1} -3.
\end{cases}
\end{equation*}
\end{theorem}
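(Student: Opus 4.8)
The plan is to reason entirely through Lenstra's sets $B_n$ of \ref{Lenstra_result}: since $\phi_{\Z[i]}(z)$ (write $\phi=\phi_{\Z[i]}$) is the least $n$ with $z\in B_n$, it suffices to describe the $B_n$ explicitly. For $z=a+bi$ write $M(z)=\max(|a|,|b|)$, $N(z)=|a|+|b|$, and let $v_2(z)$ denote the $j$ of the statement (so $z=2^{v_2(z)}z'$ with $v_2(z')=0$). First I would collect the elementary facts: (a) the digit set $\{0,\pm1,\pm i\}$ is stable under multiplication by $i$ and under conjugation, so each $B_n$ is, and hence $\phi$ and the quantities $M,N,v_2$ are invariant under all symmetries of $\Z[i]$; (b) $(1+i)(a+bi)=(a-b)+(a+b)i$, whence $M((1+i)z)=N(z)$ and $N((1+i)z)=2M(z)$; and (c) since $2\,\Z[i]=(1+i)^2\Z[i]$ and the only element of $C:=\{v_0+v_1(1+i):v_0,v_1\in\{0,\pm1,\pm i\}\}$ that is divisible by $2$ is $0$, the decomposition $B_n=C+(1+i)^2B_{n-2}$ forces $2z\in B_n\iff z\in B_{n-2}$, so $\phi(2z)=\phi(z)+2$. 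Fact (c) reduces the theorem to the case $v_2(z)=0$, and in that case a short computation with $w_{m+2}=2w_m$ and $N(z)\le 2M(z)$ shows the claimed formula is equivalent to the region statement
\[
z\in B_n\iff M(z)\le w_n-2\ \text{ and }\ N(z)\le w_{n+1}-3 .
\]

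I would establish this region statement by induction on $n$, using $B_n=\{0,\pm1,\pm i\}+(1+i)B_{n-1}$; the base case $n=0$ is immediate. The forward implication is easy and holds for all $z\in B_n$: if $z=u+(1+i)y$ with $u\in\{0,\pm1,\pm i\}$ and $y\in B_{n-1}$, then by the inductive bounds on $M(y),N(y)$, fact (b), and $2w_{n-1}=w_{n+1}$ one gets $M(z)\le(w_n-3)+1$ and $N(z)\le(w_{n+1}-4)+1$.

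The substance is the reverse implication: given $z$ with $v_2(z)=0$, $M(z)\le w_n-2$, $N(z)\le w_{n+1}-3$ and $n\ge1$, I would exhibit a digit $v_0\in\{0,\pm1,\pm i\}$ with $(z-v_0)/(1+i)\in B_{n-1}$, so that $z\in B_n$. Using the symmetries, normalize to $a\ge b\ge 0$, and split on the parities of $a,b$, exploiting that $w_n-2$ is even while $w_{n+1}-3$ and $w_n-3$ are odd (for $n\ge1$). If $a,b$ are both odd take $v_0=0$: the quotient $z/(1+i)$ has coordinate sum $b$, hence $v_2=0$, and its two norms are $N(z)/2\le(w_{n+1}-4)/2=w_{n-1}-2$ and $M(z)=a\le w_n-3$, so the inductive hypothesis applies. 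If $a$ is even and $b$ is odd take $v_0=1$: then $z-1$ has both coordinates odd, so the quotient has $v_2=0$, and its norms meet the $(n-1)$-bounds by the same arithmetic. The only delicate case is $a$ odd, $b$ even: here $(1+i)\nmid z$, but the obvious choice $v_0=1$ may leave $(z-1)/(1+i)$ divisible by a power of $2$, so one cannot simply recurse. The resolution is that $\{1,-1,i,-i\}$ represent precisely the four residues of $\Z[i]/(1+i)^3$ that are prime to $1+i$, so exactly one unit $u_0$ satisfies $(1+i)^3\mid(z-u_0)$, and for each of the other three units $u$ the quotient $(z-u)/(1+i)$ has $v_2=0$; one then checks that at least one of those three also keeps both norms within the $(n-1)$-bounds --- $v_0=1$ does so whenever $u_0\ne1$ (using $b<a\le w_n-2$), and $v_0=i$ does so when $u_0=1$, the only exceptional $z$ being the units themselves at small $n$, which already lie in $B_0$.

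I expect this last parity case to be the main obstacle: it requires simultaneously controlling the $(1+i)$-adic valuation of the quotient $(z-v_0)/(1+i)$ (to keep the induction going) and its two coordinate norms, and matching the parities of $a$ and $b$ against those of $w_n-2$ and $w_{n\pm1}-3$, together with a small finite check at low $n$. By contrast the reduction modulo powers of $2$, the translation into the region statement, and the forward implication are all routine manipulations with $M$, $N$, and the recursion $w_{m+2}=2w_m$.
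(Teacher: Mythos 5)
Your proposal is correct, but it establishes the key structural fact by a genuinely different induction than the paper. The paper first proves Theorem \ref{mainresult}, that $B_n\setminus 0=\coprod_{j}2^{j}S_{n-2j}$ where $S_n$ is exactly your region $\{\,2\nmid\gcd(x,y),\ M\le w_n-2,\ N\le w_{n+1}-3\,\}$, by peeling off the \emph{most} significant digit: for $n=2k$ it subtracts $v_{2k}(1+i)^{2k}\in\{\pm2^k,\pm2^ki\}$ and for $n=2k+1$ it subtracts $\pm2^k(1\pm i)$, which forces an even/odd split on $n$ and bookkeeping over the strata $2^jS_{n-2j}$; Theorem \ref{alg_theo} then follows in two lines. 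You instead peel off the \emph{least} significant digit via $B_n=\{0,\pm1,\pm i\}+(1+i)B_{n-1}$, after first disposing of all powers of $2$ with the clean identity $\phi_{\Z[i]}(2z)=\phi_{\Z[i]}(z)+2$; your region statement is precisely the primitive ($j=0$) slice of Theorem \ref{mainresult}, and your division-by-$(1+i)$ computation is essentially the paper's Lemma \ref{snowflakes_and_doilies} promoted from a side result to the engine of the induction. What your route costs is the digit-selection analysis in the case $a$ odd, $b$ even; what it buys is avoiding the even/odd induction on $n$ and the disjoint-union bookkeeping, and it produces the expansion from the bottom up. Two remarks for the write-up. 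First, your delicate case is simpler than you fear: when $a$ is odd and $b$ is even, the quotient $(z-i)/(1+i)$ has coordinate sum $b-1$, which is odd, so it is automatically prime to $1+i$; its norms are $N=a\le w_n-3$ (parity) and $M\le\tfrac{a+b-1}{2}\le w_{n-1}-2$, so $v_0=i$ works unconditionally and the whole mod-$(1+i)^3$ discussion and the ``exceptional units'' caveat can be deleted. Second, as you note, the forward implication must be carried through the induction for \emph{all} of $B_{n-1}$, not just its primitive part, since the estimate $M(z)\le N(y)+1$, $N(z)\le 2M(y)+1$ needs the bounds for arbitrary $y\in B_{n-1}$; stated that way, both halves of the induction close and the translation to the claimed formula for $\phi_{\Z[i]}$ is the routine computation you describe.
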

 We introduce directions to compute $\phi_{\Z[i]}(a+bi)$ that run particularly quickly on computers, as the implementation takes advantage of processors'
 underlying binary nature.
We also present an algorithm using $\phi_{\Z[i]}$ to find minimal $(1+i)$-ary expansions of any Gaussian integer.  
 Lastly, we provide step-by-step examples computing $\phi_{\Z[i]} (90 + 44i)$ in Section \ref{how_to} and a minimal $(1+i)$-ary expansion of $90 +44i$ in 
 Section \ref{finding_expansions}.

\section{Euclidean functions and the Gaussian integers, $\Z[i]$}


The sets $B_n$, defined in the introduction, have several nice properties.  They are closed under both complex conjugation and multiplication by elements of $B_0 = \{0, \pm 1, \pm i\}$.  
If $a+bi \in B_n$, then $(1+i)^j (a+bi) \in B_{n+j}$.  Similarly, if $2^j$ divides both $a$ and $b$ for some $a+bi \in B_n$, then 
$\frac{a}{2^j} + \frac{b}{2^j} i \in B_{n-2j}$. 
Our definition of $B_n$ does not, however, allow us to easily determine whether a given element $a+bi$ is a member of $B_n$.  

We use the sequence $w_n$, also defined in the introduction, to define the following `octagonal snowflakes,' which we will use to deliminate the shape of the $B_n$'s.  
We use $a|b$ to denote `$a$ divides $b$', we use $a^b \parallel c$ when $a^b |c$ and $a^{b+1} \nmid c$, and we use $(a,b)$ as 
shorthand for `the greatest common divisor of $a$ and $b$.'  If $x \in \Z[i]$, we denote the real and imaginary parts by $\text{Re}(x)$ and $\text{Im}(x)$, so that $x = \text{Re}(x) + \text{Im}(x) i$.

\begin{definition}\label{octagons}  For $n \geq 0$, we define 
\begin{equation*}
S_n: = \{ x+yi \in \Z[i]\setminus 0: 2 \nmid \gcd(x,y); |x|,|y| \leq w_n -2; |x| + |y| \leq w_{n+1} - 3 \}.
\end{equation*}
\end{definition}

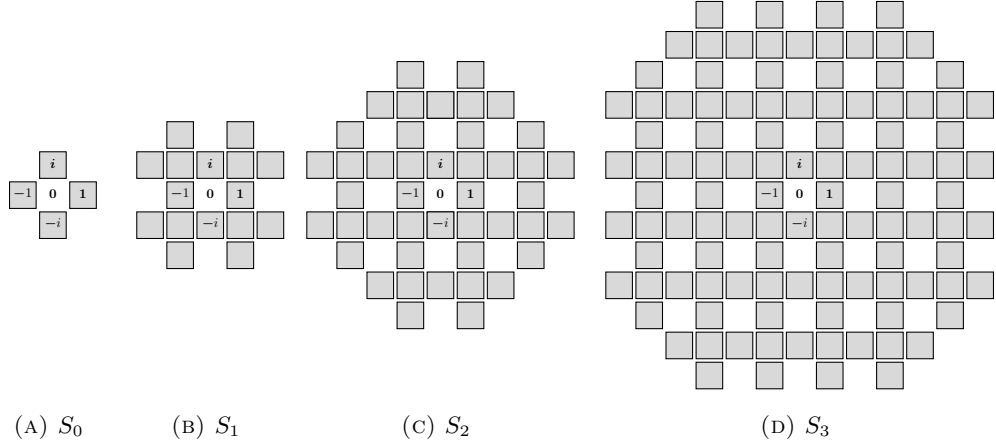
\begin{figure}[ht] \centering

\subcaptionbox{$S_0$}{
	\begin{tikzpicture} [scale=.5, transform shape]
		
		\draw[white] (-1.5, -5) -- (1.5, -5) -- (1.5,5.75) -- (-1.5, 5.75) -- (-1.5,-5);		
		\node[square]  at (.8,0) {};  
		\node[square]  at (-.8,0) {}; 
		\node[square]  at (0,.8) {}; 
		\node[square]  at (0,-.8) {}; 
		
		\node [circle,minimum size=1cm] at (0,.4) {$\bm 0 $};
		\node [circle,minimum size=1cm] at (.8,.4) {$\bm 1 $};
		\node [circle,minimum size=1cm] at (-.8,.4) {$\bm -1 $};
		\node [circle,minimum size=1cm] at (0,1.2) {$\bm i $};
		\node [circle,minimum size=1cm] at (0,-.4) {$\bm -i $};

	\end{tikzpicture}	}	
\subcaptionbox{$S_1$}{
	\begin{tikzpicture} [scale=.5, transform shape]
						
		\draw[white] (-1.5, -5) -- (1.5, -5) -- (1.5,5.75) -- (-1.5, 5.75) -- (-1.5,-5);		
		\foreach \y in {-2,...,2}
		\node[square]  at (.8,.8*\y) {}; 
		
		\foreach \y in {-2,...,2}
		\node[square]  at (-.8,.8*\y) {}; 
		
		\foreach \y in {-1,1}
		\node[square]  at (-1.6,.8*\y) {}; 
		
		\foreach \y in {-1,1}
		\node[square]  at (1.6,.8*\y) {}; 
		
		\foreach \y in {-1,1}
		\node[square]  at (0,.8*\y) {};

		\node [circle,minimum size=1cm] at (0,.4) {$\bm 0 $};
		\node [circle,minimum size=1cm] at (.8,.4) {$\bm 1 $};
		\node [circle,minimum size=1cm] at (-.8,.4) {$\bm -1 $};
		\node [circle,minimum size=1cm] at (0,1.2) {$\bm i $};
		\node [circle,minimum size=1cm] at (0,-.4) {$\bm -i $};

	\end{tikzpicture}			}	
\subcaptionbox{$S_2$}{
	\begin{tikzpicture} [scale=.5, transform shape]
						
		\draw[white] (-3.5, -5) -- (3.5, -5) -- (3.5,5.75) -- (-3.5, 5.75) -- (-3.5,-5);			
		\node[square]  at (.8,0) {};  
		\node[square]  at (-.8,0) {}; 
		\node[square]  at (0,.8) {}; 
		\node[square]  at (0,-.8) {};

		\node[square]  at (.8, .8) {}; 
		\node[square]  at (-.8, .8) {};
		\node[square]  at (-.8, -.8) {};
		\node[square]  at (.8, -.8) {};
		
		\node[square]  at (0, 2.4) {}; 
		
		\node[square]  at (.8, 1.6) {};
		\node[square]  at (.8, 2.4) {};
		\node[square]  at (.8, 3.2) {};
		
		\node[square]  at (1.6, .8) {}; 
		\node[square]  at (1.6, 2.4) {};
		
		\node[square]  at (2.4, .8) {};
		\node[square]  at (2.4, 1.6) {};
		
		\node[square]  at (3.2, .8) {};
		
		\node[square]  at (2.4, 0) {}; 
		
		\node[square]  at (0, -2.4) {}; 
		
		\node[square]  at (.8, -1.6) {};
		\node[square]  at (.8, -2.4) {};
		\node[square]  at (.8, -3.2) {};
		
		\node[square]  at (1.6, -.8) {}; 
		\node[square]  at (1.6, -2.4) {};
		
		\node[square]  at (2.4, -.8) {};
		\node[square]  at (2.4, -1.6) {};
		
		\node[square]  at (3.2, -.8) {};
		
		\node[square]  at (0, 2.4) {}; 
		
		\node[square]  at (-.8, 1.6) {};
		\node[square]  at (-.8, 2.4) {};
		\node[square]  at (-.8, 3.2) {};
		
		\node[square]  at (-1.6, .8) {}; 
		\node[square]  at (-1.6, 2.4) {};
		
		\node[square]  at (-2.4, .8) {};
		\node[square]  at (-2.4, 1.6) {};
		
		\node[square]  at (-3.2, .8) {};
		
		\node[square]  at (-2.4, 0) {}; 
		
		\node[square]  at (-.8, -1.6) {};
		\node[square]  at (-.8, -2.4) {};
		\node[square]  at (-.8, -3.2) {};
		
		\node[square]  at (-1.6, -.8) {}; 
		\node[square]  at (-1.6, -2.4) {};
		
		\node[square]  at (-2.4, -.8) {};
		\node[square]  at (-2.4, -1.6) {};
		
		\node[square]  at (-3.2, -.8) {};
		
		\node[square]  at (0, -.8) {};
		
		\node [circle,minimum size=1cm] at (0,.4) {$\bm 0 $};
		\node [circle,minimum size=1cm] at (.8,.4) {$\bm 1 $};
		\node [circle,minimum size=1cm] at (-.8,.4) {$\bm -1 $};
		\node [circle,minimum size=1cm] at (0,1.2) {$\bm i $};
		\node [circle,minimum size=1cm] at (0,-.4) {$\bm -i $};

	\end{tikzpicture}}	
	\subcaptionbox{$S_3$}{
	\begin{tikzpicture} [scale=.5, transform shape]
		
		\draw[white] (-5.5, -5) -- (5.5, -5) -- (5.5,5.75) -- (-5.5, 5.75) -- (-5.5,-5);
		\foreach \y in {-5,-3,-1, 1,3,5}
		\node[square]  at (0,.8*\y) {};
		
		\foreach \y in {-6,...,6}
		\node[square]  at (.8,.8*\y) {};
		
		\foreach \y in {-6,...,6}
		\node[square]  at (-.8,.8*\y) {};
		
		\foreach \y in {-5,-3,-1, 1,3,5}
		\node[square]  at (1.6,.8*\y) {};
		
		\foreach \y in {-5,-3,-1, 1,3,5}
		\node[square]  at (-1.6,.8*\y) {};
		
		\foreach \y in {-6,...,6}
		\node[square]  at (2.4,.8*\y) {};
		
		\foreach \y in {-6,...,6}
		\node[square]  at (-2.4,.8*\y) {};
		
		\foreach \y in {-5,-3,-1, 1,3,5}
		\node[square]  at (3.2,.8*\y) {};
		
		\foreach \y in {-5,-3,-1, 1,3,5}
		\node[square]  at (-3.2,.8*\y) {};
		
		\foreach \y in {-4,...,4}
		\node[square]  at (4,.8*\y) {};
		
		\foreach \y in {-4,...,4}
		\node[square]  at (-4,.8*\y) {};
		
		\foreach \y in {-3,-1, 1,3}
		\node[square]  at (-4.8,.8*\y) {};
		
		\foreach \y in {-3,-1, 1,3}
		\node[square]  at (4.8,.8*\y) {};

		\node [circle,minimum size=1cm] at (0,.4) {$\bm 0 $};
		\node [circle,minimum size=1cm] at (.8,.4) {$\bm 1 $};
		\node [circle,minimum size=1cm] at (-.8,.4) {$\bm -1 $};
		\node [circle,minimum size=1cm] at (0,1.2) {$\bm i $};
		\node [circle,minimum size=1cm] at (0,-.4) {$\bm -i $};

	\end{tikzpicture}}	
	\caption{Examples of $S_n$}
	\label{fig:first_octagons}
\end{figure}
Our sets are lacy and symmetrical 
 (see Figure \ref{fig:first_octagons}) 
 along the four lines $\text{Re}(x) =0$, $\text{Im}(x) = 0$, $\text{Re}(x) = \text{Im}(x)$,
and $\text{Re}(x) = - \text{Im}(x)$ because the $S_n$ are closed under complex conjugation and multiplication by units, 
like the sets 
$B_n$.  They are nested, so that $S_0 \subset S_1 \subset S_2$.  
The following theorem demonstrates the relationship between the sets $S_n$ and $B_n$.

\begin{theorem}\label{mainresult}
The set $B_n \setminus 0 $ is a disjoint union of multiples of the sets $S_{n-2j}$, where
\[B_n \setminus 0 =  \displaystyle \coprod_{j=0}^{\lfloor n/2 \rfloor } 2^j S_{n- 2j}.\]
\end{theorem}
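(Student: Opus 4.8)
The plan is to establish disjointness (which is easy), then the two inclusions; the second of these reduces to the single assertion that $S_m\subseteq B_m$ for every $m\ge 0$, and the inductive proof of that assertion is the real content. Disjointness is immediate: an element of $2^jS_{n-2j}$ has the shape $2^j(x+yi)$ with $2\nmid\gcd(x,y)$, so $2^j$ is the exact power of $2$ dividing $\gcd(\mathrm{Re},\mathrm{Im})$ of that element; since the element determines this power, it determines $j$, and the pieces are pairwise disjoint.

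For the inclusion $B_n\setminus 0\subseteq\coprod_{j}2^jS_{n-2j}$ I would first prove two elementary estimates. The first is a size bound: if $w=\sum_{k=0}^m v_k(1+i)^k$ with $v_k\in\{0,\pm1,\pm i\}$, then, using $(1+i)^{2\ell}\in\{\pm 2^\ell,\pm 2^\ell i\}$ and $(1+i)^{2\ell+1}\in\{2^\ell(\pm1\pm i)\}$, the $k$-th digit contributes at most $2^{\lfloor k/2\rfloor}$ to each of $|\mathrm{Re}(w)|$ and $|\mathrm{Im}(w)|$, and at most $2^{\lfloor k/2\rfloor}$ or $2^{\lfloor k/2\rfloor+1}$ (according as $k$ is even or odd) to $|\mathrm{Re}(w)|+|\mathrm{Im}(w)|$; summing these geometric series gives precisely $|\mathrm{Re}(w)|,|\mathrm{Im}(w)|\le w_m-2$ and $|\mathrm{Re}(w)|+|\mathrm{Im}(w)|\le w_{m+1}-3$. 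The second is a valuation bound: if $z\in B_n\setminus 0$ and $2^j\parallel\gcd(\mathrm{Re}(z),\mathrm{Im}(z))$, then $2j\le n$, because the $(1+i)$-adic valuation of $z$ equals the index of its lowest nonzero digit and is hence at most $n$, while it also equals $2j+v_{1+i}(z/2^j)\ge 2j$ since $z/2^j$ is not divisible by $2=-i(1+i)^2$. Granting these, take $z\in B_n\setminus 0$ and let $2^j\parallel\gcd(\mathrm{Re}(z),\mathrm{Im}(z))$: the valuation bound gives $0\le j\le\lfloor n/2\rfloor$, the reduction fact from Section 2 then gives $z/2^j\in B_{n-2j}$, the size bound supplies the two octagon inequalities for $z/2^j$, and $z/2^j$ has odd gcd by the choice of $j$; hence $z/2^j\in S_{n-2j}$ and $z\in 2^jS_{n-2j}$.

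The reverse inclusion reduces to $S_m\subseteq B_m$, because $2=-i(1+i)^2$ and $S_m$ is stable under multiplication by units, so $2^jS_{n-2j}=(1+i)^{2j}S_{n-2j}\subseteq(1+i)^{2j}B_{n-2j}\subseteq B_n$. I would prove $S_m\subseteq B_m$ by strong induction on $m$, the cases $m=0,1$ being a direct inspection. For $m\ge 2$ and $z=x+yi\in S_m$, the symmetries of $S_m$ under conjugation and multiplication by units let me assume $0\le y\le x$, and I may assume $z$ is not a unit; it then suffices to find $v_0\in\{0,\pm1,\pm i\}$ with $(z-v_0)/(1+i)\in B_{m-1}$, for then $z=v_0+(1+i)\big((z-v_0)/(1+i)\big)\in B_m$. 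If $x\equiv y\pmod 2$, then $x$ and $y$ are both odd (the gcd is odd), I take $v_0=0$, and one checks directly that $z/(1+i)=\tfrac{x+y}{2}+\tfrac{y-x}{2}i$ satisfies the three conditions defining $S_{m-1}$, using $w_{m+1}=2w_{m-1}$ and the fact that $x\le w_m-3$ (since $x$ is odd while $w_m$ is even for $m\ge 1$). If $x\not\equiv y\pmod 2$ I must take $v_0$ to be a unit (so that $(z-v_0)/(1+i)$ is a Gaussian integer), and the correct unit depends on whether $z$ is near the side $|x|=w_m-2$ or near the side $|x|+|y|=w_{m+1}-3$ of the octagon — the idea being to push $z$ inward along whichever inequality is tight. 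After the right choice, $(z-v_0)/(1+i)$ satisfies the inequalities defining $S_{m-1}$; if its gcd is odd it lies in $S_{m-1}$, and if its gcd is even I invoke the statement of the theorem itself at the smaller index to place $(z-v_0)/(1+i)$ in some $2^{j'}S_{m-1-2j'}\subseteq B_{m-1}$.

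I expect this last step — the selection of $v_0$ in the non-parity case — to be the main obstacle. Because many of the octagon inequalities for $S_{m-1}$ are attained with equality, the naive greedy choice ``subtract a unit to shrink $|x|$'' can fail, producing a point with even gcd lying outside $S_{m-1}$; for instance $5\in S_3$, yet $(5-1)/(1+i)=2-2i\notin B_2$, so one is forced to take $v_0=-i$ and use $(5+i)/(1+i)=3-2i\in S_2$ instead. Organizing the case division according to position on the boundary of the octagon, and verifying the short list of inequalities in each case, is where the care is needed.
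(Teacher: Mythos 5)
Your disjointness argument and your proof of the inclusion $B_n\setminus 0\subseteq\coprod_j 2^jS_{n-2j}$ are correct, and that half is genuinely different from the paper's: the paper obtains both inclusions simultaneously by induction on $n$, peeling off the top digit $u_n(1+i)^n\in\{\pm 2^k,\pm 2^ki\}$ (or $\{2^k(\pm1\pm i)\}$ in the odd case) and applying the inductive description of $B_{n-1}$, whereas your digit-by-digit geometric-series bound yields the two octagon inequalities for all of $B_n$ in one stroke, and your $(1+i)$-adic valuation bound cleanly caps $j$ at $\lfloor n/2\rfloor$. That part of your write-up is, if anything, tidier than the paper's.

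The gap is in the reverse inclusion, which you correctly reduce to $S_m\subseteq B_m$ but do not prove. The entire content of that containment is the choice of which digit to peel off and the verification that the remainder stays inside the correct octagon, and in the case $x\not\equiv y\pmod 2$ you leave the choice of $v_0$ unspecified (``the correct unit depends on whether $z$ is near the side\dots'', ``where the care is needed''). Your own example shows this step is genuinely delicate, not routine: a wrong unit produces a remainder $(z-v_0)/(1+i)$ with even gcd, and then membership in $B_{m-1}$ requires the strictly stronger inequalities $|x'|,|y'|\le w_{m-1}-2^{j'+1}$ and $|x'|+|y'|\le w_m-3\cdot 2^{j'}$ for whatever $j'$ exactly divides the remainder; invoking the theorem at index $m-1$ only tells you what $B_{m-1}$ \emph{is}, not that your remainder satisfies those inequalities. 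So the non-parity case is an unproved assertion, and with it the theorem. The paper sidesteps exactly this difficulty by peeling the \emph{top} digit instead of the bottom one: after normalizing $a+bi\in S_n\setminus S_{n-1}$ to the first octant, the element to subtract is forced ($2^k$ for $n=2k$, $2^k(1+i)$ for $n=2k+1$), the three inequalities defining $S_{n-1}$ follow in a few lines, and the remainder automatically has odd gcd (for instance $a-2^k\equiv a\pmod 2$ when $k\ge 1$), so one never falls into a deeper, smaller octagon. To salvage your bottom-up strategy you would need either a rule for $v_0$ that provably yields a remainder with odd gcd lying in $S_{m-1}$, or a complete treatment of the even-gcd case; as written, the key containment is asserted rather than proved.
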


\begin{proof}
We prove this by induction on $n$.  We first show that this holds when $n$ is even, and then we prove it for odd $n$.

Our base cases are $B_0 \setminus 0 = \{ \pm 1, \pm i \} $
and
$B_1 \setminus 0 = \{ \pm 1, \pm i, \pm 1 \pm i, \pm 2 \pm i, \pm 1 \pm 2i \}$.
Definition \ref{octagons} shows that $B_0 = S_0$ and $B_1 = S_1$.

\underline{\textbf{Case} $\mathbf{n =2k}$}:  We prove $B_{2k} = S_{2k}$ by showing containment in both directions. 
First suppose that $k \geq 1$ and that the theorem holds for all $j$, $0 \leq j < n=2k$.  
If $a+bi \in B_{2k} \setminus B_{2k-1}$, then there exists a unit $u \in \mathbb{Z}[i]^{\times}$ such that $(u(a+bi)-2^k) \in B_{2k-1}$,
so we may assume without loss of generality that $(a-2^k) + bi \in B_{2k-1}$.  
Our induction hypothesis implies that there exists some $j$, $0 \leq j < k$,
 such that $2^j \parallel (a-2^k, b)$,
\begin{align*}
 |a-2^k|, |b| \leq w_{2k-1} - 2^{j+1} &\text{ and } |a-2^k| + |b| \leq w_{2k} - 3\cdot 2^j.\\
\intertext{This means that }
|a|, |b| \leq w_{2k-1} + 2^k - 2^{j+1} &\text{ and }
|a|+|b| \leq w_{2k} + 2^k - 3 \cdot 2^j,\\
\intertext{and thus}
 |a|, |b| \leq w_{2k} - 2^{j+1}&\text{ and }
|a|+|b| \leq w_{2k+1} - 3 \cdot 2^j.
\end{align*}
As $2^j \parallel (a,b)$, $a+bi \in 2^j S_{2(k-j)}$ and $B_{2k}\setminus 0  \subset\displaystyle \bigcup_{j=0}^{\lfloor k \rfloor } 2^j S_{2(k-j)}$.

Now we prove containment in the other direction.  Suppose that $0 \leq j \leq k$ and that $a+bi \in \displaystyle 2^j S_{n- 2j}$. 
If $j > 0$, then $\frac{a}{2^j} + \frac{b}{2^j} i \in S_{2(k-j)}$, which is contained in $B_{2(k-j)}\setminus 0$ by our induction hypothesis, and thus $a+bi \in B_{2k}\setminus 0$.

If $j =0$, then $a + bi \in S_{2k}$.  Due to the symmetries of $S_{2k}$, we can assume without loss of generality that $a \geq b \geq 0$.  
Our induction hypothesis implies that $S_{2k-1} \subset B_{2k-1}\setminus 0$ and as $B_{2k-1} \subset B_{2k}$, we need only concern ourselves 
with $a + bi \in S_{2k} \setminus S_{2k-1}$.  
If $a+bi \in S_{2k} \setminus S_{2k-1}$, then either 
$a > w_{2k-1}-2 = 2^{k+1} - 2$ or $a+b > w_{2k}-3 = 2^{k+1} + 2^k - 3$.

In both of these situations, 
$a \geq 2^k$, so \[|a - 2^k| \leq w_{2k} - 2^k - 2 = w_{2k-1} -2.\] 
As $a + b \leq w_{2k+1} - 3$ and $0 \leq b \leq a$,  $2b \leq w_{2k+1} -4$, and thus $b \leq w_{2k-1} - 2$.  
Lastly, note 
$$|a-2^k| + |b| = a+b-2^k \leq w_{2k+1}-3-2^k =  w_{2k} -3.$$
The pair $a$ and $b$ are not both even, so $2 \nmid (a-2^k, b)$ and thus $(a-2^k) + bi \in S_{2k-1} \subset B_{2k-1}$.  Therefore,
$a+bi \in (B_{2k-1} + 2^k) \subset B_{2k}$, and $\displaystyle \bigcup_{j=0}^{\lfloor k \rfloor } 2^j S_{2(k-j)}$ is contained in $B_{2k}$.  
We have shown containment in both directions, so the two sets are, indeed, equal.

We assumed that $k\geq 1$ and that the theorem held for all $j$, $0 \leq j < 2k$.  After the first part of the proof, we can now say under the same assumptions, that the claim holds for all $j$, $0 \leq j \leq 2k$.

\underline{\textbf{Case} $\mathbf{n=2k+1}$}:  We will again prove this by containment in both directions.  
We just showed that $B_{2k} \subset \cup_{j=0}^k 2^j S_{2(k-j)} \subset \cup_{j=0}^k 2^j S_{2(k-j) +1}$.
Let $a+ bi \in B_{2k+1}\setminus B_{2k}$.  
We may again assume without loss of 
generality that $(a+bi) - 2^k(1+i) \in B_{2k}$ and that there exists some $j$, $0 \leq j \leq k$, such that $(a - 2^k) + (b- 2^k)i \in 2^j S_{2(k-j) }$.

Note that if $j=k$, then $|a-2^k| + |b-2^k| \leq 2^k$, so one of the summands must be zero.  
This implies that, even when $j =k$, $2^j \parallel (a,b)$.
We know that $|a-2^k|, |b-2^k| \leq w_{2k} - 2^{j+1}$ and $|a-2^k | + |b-2^k| \leq w_{2k+1} - 3 \cdot 2^j$.  
As $w_{2k} + 2^k = w_{2k+1}$ and $w_{2k+1} + 2^{k+1} =w_{2k+2}$, we see that 
\begin{align*}
2^j \parallel (a, b); |a|,|b| \leq w_{2k+1}-2^{j+1}; \text{ and } |a| + |b| \leq w_{2k+2} - 3\cdot 2^j,
\end{align*}
We infer that $B_{2k+1}\setminus B_{2k}$, and thus all of $B_{2k+1}$, is contained inside $\displaystyle \bigcup_{j=0}^{k } 2^j S_{2(k-j) +1}$.

To prove the other direction, let $0 \leq j \leq k$ and let $a+bi \in 2^j S_{2(k-j) +1}$.  
We can again assume without loss of generality that $a \geq b \geq 0$.  
When $j \geq 1$, we can apply our induction hypothesis to see that $\frac{a}{2^j} + \frac{b}{2^j} i \in S_{2(k-j) +1} \subset B_{2(k-j) +1}$, and thus $a + bi \in B_{2k + 1}$.

When $j =0$, we restrict ourselves once more to $a + bi \in S_{2k +1} \setminus S_{2k}$, as $S_{2k} \subset B_{2k} \subset B_{2k+1}$.  
If $a + bi \in S_{2k+1} \setminus S_{2k}$, then either $a>w_{2k} -2$ or $a+b > w_{2k+1} -3$.
We see that $a \geq 2^{k+1} -1$ in both situations, so $2^k -1 \leq a - 2^k \leq w_{2k}-2$ and $-2^k \leq b-2^k \leq w_{2k} -2$.  
If $b \geq 2^k$, then 
\begin{equation*}
 |a - 2^k| + |b-2^k| = a+b - 2^{k+1} \leq w_{2k+2} - 2^{k+1} - 3 = w_{2k+1} -3.
\end{equation*}
If $0< b < 2^k$, then 
\begin{equation*}
|a-2^k| + |b-2^k| \leq a-2^k + 2^k -b \leq a -1 \leq w_{2k+1} - 3.
\end{equation*}
Lastly, if $b=0$, then $a$ is odd, so 
\begin{equation*}
|a-2^k| + |b-2^k| = a \leq w_{2k+1} -3.
\end{equation*}
 
 In summary, if $j =0$ and $a+bi \in S_{2k+1} \setminus S_{2k}$, then $(a- 2^k) + (b - 2^k) i \in S_{2k} \subset B_{2k}$, so 
 \[a+bi = 2^k( 1 +i) + (a -2^k) + (b-2^k)i \in B_{2k+1}.\]  
 We conclude that $\displaystyle \bigcup_{j=0}^{k } 2^j S_{2(k-j) +1} \subset B_{2k+1} \setminus 0$, and the two sets are equal.  
\end{proof}

\begin{example} We see in Figure \ref{fig:B_2} that the set $B_2 \setminus 0$ is the union of $S_2$ (in light gray) and $2 S_0$ (in black). 
Similarly, the set $B_3 \setminus 0$ is the union of $S_3$ (in light gray) and $2 S_1$ (in black). 
\end{example}

\begin{figure}[ht]\centering 

\subcaptionbox{$B_2 \setminus 0 = S_2 \cup 2S_0$}{
	\begin{tikzpicture} [scale=.5, transform shape]
		\draw[white] (-5.5, -5) -- (5.5, -5) -- (5.5,5.75) -- (-5.5, 5.75) -- (-5.5,-5);	
		
		\foreach \y in {-3,-1, 1,3}
		\node[square]  at (0,.8*\y) {};
		
		\foreach \y in {-2,2}
		\node[squareddd]  at (0,.8*\y) {};
		
		\foreach \y in {-4,...,4}
		\node[square]  at (.8,.8*\y) {};
		
		\foreach \y in {-4,...,4}
		\node[square]  at (-.8,.8*\y) {};
		
		\foreach \y in {-3,-1,1,3}
		\node[square]  at (1.6,.8*\y) {};
		
		\foreach \y in {-3,-1,1,3}
		\node[square]  at (-1.6,.8*\y) {};
		
		\node[squareddd]  at (-1.6,0) {};
		\node[squareddd]  at (1.6,0) {};
		
		\foreach \y in {-2,...,2}
		\node[square]  at (2.4,.8*\y) {};
		
		\foreach \y in {-2,...,2}
		\node[square]  at (-2.4,.8*\y) {};
		
		\foreach \y in {-1,1}
		\node[square]  at (3.2,.8*\y) {};
		
		\foreach \y in {-1,1}
		\node[square]  at (-3.2,.8*\y) {};
		
		\node [circle,minimum size=1cm] at (0,.4) {$\bm 0 $};
		\node [circle,minimum size=1cm] at (.8,.4) {$\bm 1 $};
		\node [circle,minimum size=1cm] at (-.8,.4) {$\bm -1 $};
		\node [circle,minimum size=1cm] at (0,1.2) {$\bm i $};
		\node [circle,minimum size=1cm] at (0,-.4) {$\bm -i $};

\end{tikzpicture}}
\subcaptionbox{$B_3 \setminus 0 = S_3 \cup 2 S_1$}{
	\begin{tikzpicture} [scale=.5, transform shape]
		
		\draw[white] (-5.5, -5) -- (5.5, -5) -- (5.5,5.75) -- (-5.5, 5.75) -- (-5.5,-5);	
		\foreach \y in {-5,-3,-1, 1,3,5}
		\node[square]  at (0,.8*\y) {};
		
		\foreach \y in {-6,...,6}
		\node[square]  at (.8,.8*\y) {};
		
		\foreach \y in {-6,...,6}
		\node[square]  at (-.8,.8*\y) {};
		
		\foreach \y in {-5,-3,-1, 1,3,5}
		\node[square]  at (1.6,.8*\y) {};
		
		\foreach \y in {-5,-3,-1, 1,3,5}
		\node[square]  at (-1.6,.8*\y) {};
		
		\foreach \y in {-6,...,6}
		\node[square]  at (2.4,.8*\y) {};
		
		\foreach \y in {-6,...,6}
		\node[square]  at (-2.4,.8*\y) {};
		
		\foreach \y in {-5,-3,-1, 1,3,5}
		\node[square]  at (3.2,.8*\y) {};
		
		\foreach \y in {-5,-3,-1, 1,3,5}
		\node[square]  at (-3.2,.8*\y) {};
		
		\foreach \y in {-4,...,4}
		\node[square]  at (4,.8*\y) {};
		
		\foreach \y in {-4,...,4}
		\node[square]  at (-4,.8*\y) {};
		
		\foreach \y in {-3,-1, 1,3}
		\node[square]  at (-4.8,.8*\y) {};
		
		\foreach \y in {-3,-1, 1,3}
		\node[square]  at (4.8,.8*\y) {};
		
		\foreach \y in {-2,...,2}
		\node[squareddd]  at (1.6,1.6*\y) {};
		
		\foreach \y in {-2,...,2}
		\node[squareddd]  at (-1.6,1.6*\y) {};
		
		\foreach \y in {-2,2}
		\node[squareddd]  at (3.2,.8*\y) {};
		
		\foreach \y in {-2,2}
		\node[squareddd]  at (-3.2,.8*\y) {};
		
		\foreach \y in {-2,2}
		\node[squareddd]  at (0,.8*\y) {};

		\node [circle,minimum size=1cm] at (0,.4) {$\bm 0 $};
		\node [circle,minimum size=1cm] at (.8,.4) {$\bm 1 $};
		\node [circle,minimum size=1cm] at (-.8,.4) {$\bm -1 $};
		\node [circle,minimum size=1cm] at (0,1.2) {$\bm i $};
		\node [circle,minimum size=1cm] at (0,-.4) {$\bm -i $};

	\end{tikzpicture}}	
\caption{Examples of the sets $B_n \setminus 0$}
\label{fig:B_2}
\end{figure}
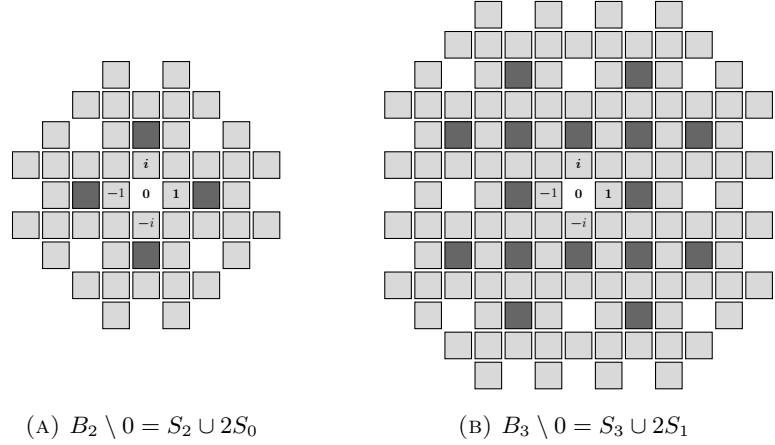

\subsection{An Alternate Formulation of our Main Result}

We can use another set of octagons that are even lacier than our sequence $S_n$.  

\begin{definition}\label{doilies_def}
For $n \geq 0$, we define 
\[D_n := \{ x +yi \in \Z[i]: 2 \nmid (x+y); |x|, |y| \leq w_n -2; |x| + |y| \leq w_{n+1} -3\}.\]
\end{definition}		

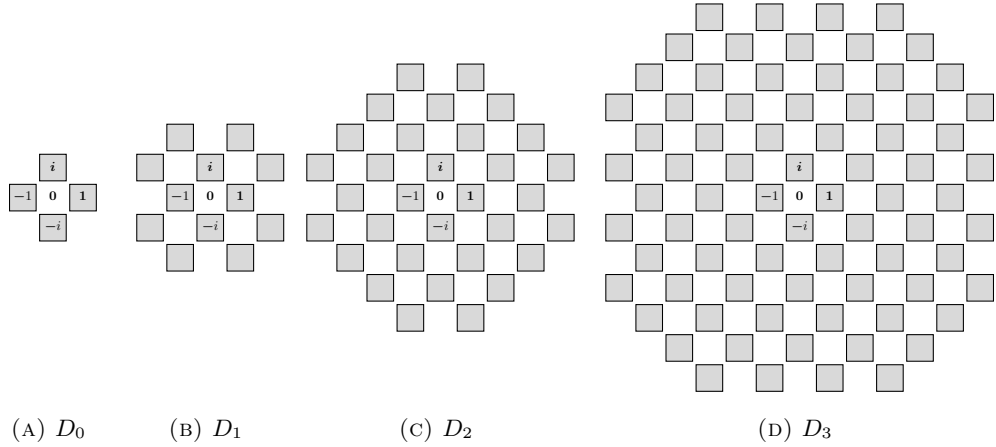
\begin{figure}[ht] \centering

\subcaptionbox{$D_0$}{
	\begin{tikzpicture} [scale=.5, transform shape]
		\draw[white] (-1.5, -5) -- (1.5, -5) -- (1.5,5.75) -- (-1.5, 5.75) -- (-1.5,-5);				
		\node[square]  at (.8,0) {};  
		\node[square]  at (-.8,0) {}; 
		\node[square]  at (0,.8) {}; 
		\node[square]  at (0,-.8) {}; 
		
		\node [circle,minimum size=1cm] at (0,.4) {$\bm 0 $};
		\node [circle,minimum size=1cm] at (.8,.4) {$\bm 1 $};
		\node [circle,minimum size=1cm] at (-.8,.4) {$\bm -1 $};
		\node [circle,minimum size=1cm] at (0,1.2) {$\bm i $};
		\node [circle,minimum size=1cm] at (0,-.4) {$\bm -i $}; 
		 
	\end{tikzpicture}	}	
\subcaptionbox{$D_1$}{
	\begin{tikzpicture} [scale=.5, transform shape]
		\draw[white] (-1.5, -5) -- (1.5, -5) -- (1.5,5.75) -- (-1.5, 5.75) -- (-1.5,-5);					
					
		\foreach \y in {-2,0,2}
		\node[square]  at (.8,.8*\y) {}; 
		
		\foreach \y in {-2,0,2}
		\node[square]  at (-.8,.8*\y) {}; 
		
		\foreach \y in {-1,1}
		\node[square]  at (-1.6,.8*\y) {}; 
		
		\foreach \y in {-1,1}
		\node[square]  at (1.6,.8*\y) {}; 
		
		\foreach \y in {-1,1}
		\node[square]  at (0,.8*\y) {};

		\node [circle,minimum size=1cm] at (0,.4) {$\bm 0 $};
		\node [circle,minimum size=1cm] at (.8,.4) {$\bm 1 $};
		\node [circle,minimum size=1cm] at (-.8,.4) {$\bm -1 $};
		\node [circle,minimum size=1cm] at (0,1.2) {$\bm i $};
		\node [circle,minimum size=1cm] at (0,-.4) {$\bm -i $};

	\end{tikzpicture}			}	
\subcaptionbox{$D_2$}{
	\begin{tikzpicture} [scale=.5, transform shape]
		\draw[white] (-3.5, -5) -- (3.5, -5) -- (3.5,5.75) -- (-3.5, 5.75) -- (-3.5,-5);					
					
		\foreach \y in {-3,-1, 1,3}
		\node[square]  at (0,.8*\y) {};
		
		\foreach \y in {-2,...,2}
		\node[square]  at (.8,1.6*\y) {};
		
		\foreach \y in {-2,...,2}
		\node[square]  at (-.8,1.6*\y) {};
		
		\foreach \y in {-3,-1, 1,3}
		\node[square]  at (1.6,.8*\y) {};
		
		\foreach \y in {-3,-1, 1,3}
		\node[square]  at (-1.6,.8*\y) {};
		
		\foreach \y in {-1,...,1}
		\node[square]  at (2.4,1.6*\y) {};
		
		\foreach \y in {-1,...,1}
		\node[square]  at (-2.4,1.6*\y) {};
		
		\node[square]  at (3.2,.8) {};
		
		\node[square]  at (-3.2,.8) {};
		
		\node[square]  at (3.2,-.8) {};
		
		\node[square]  at (-3.2,-.8) {};
		
		\node [circle,minimum size=1cm] at (0,.4) {$\bm 0 $};
		\node [circle,minimum size=1cm] at (.8,.4) {$\bm 1 $};
		\node [circle,minimum size=1cm] at (-.8,.4) {$\bm -1 $};
		\node [circle,minimum size=1cm] at (0,1.2) {$\bm i $};
		\node [circle,minimum size=1cm] at (0,-.4) {$\bm -i $};

	\end{tikzpicture}}	
	\subcaptionbox{$D_3$}{
	\begin{tikzpicture} [scale=.5, transform shape]
		\draw[white] (-5.5, -5) -- (5.5, -5) -- (5.5,5.75) -- (-5.5, 5.75) -- (-5.5,-5);	
		\foreach \y in {-5,-3,-1, 1,3,5}
		\node[square]  at (0,.8*\y) {};
		
		\foreach \y in {-3,...,3}
		\node[square]  at (.8,1.6*\y) {};
		
		\foreach \y in {-3,...,3}
		\node[square]  at (-.8,1.6*\y) {};
		
		\foreach \y in {-5,-3,-1, 1,3,5}
		\node[square]  at (1.6,.8*\y) {};
		
		\foreach \y in {-5,-3,-1, 1,3,5}
		\node[square]  at (-1.6,.8*\y) {};
		
		\foreach \y in {-3,...,3}
		\node[square]  at (2.4,1.6*\y) {};
		
		\foreach \y in {-3,...,3}
		\node[square]  at (-2.4,1.6*\y) {};
		
		\foreach \y in {-5,-3,-1, 1,3,5}
		\node[square]  at (3.2,.8*\y) {};
		
		\foreach \y in {-5,-3,-1, 1,3,5}
		\node[square]  at (-3.2,.8*\y) {};
		
		\foreach \y in {-2,...,2}
		\node[square]  at (4,1.6*\y) {};
		
		\foreach \y in {-2,...,2}
		\node[square]  at (-4,1.6*\y) {};
		
		\foreach \y in {-3,-1, 1,3}
		\node[square]  at (-4.8,.8*\y) {};
		
		\foreach \y in {-3,-1, 1,3}
		\node[square]  at (4.8,.8*\y) {};

		\node [circle,minimum size=1cm] at (0,.4) {$\bm 0 $};
		\node [circle,minimum size=1cm] at (.8,.4) {$\bm 1 $};
		\node [circle,minimum size=1cm] at (-.8,.4) {$\bm -1 $};
		\node [circle,minimum size=1cm] at (0,1.2) {$\bm i $};
		\node [circle,minimum size=1cm] at (0,-.4) {$\bm -i $};

	\end{tikzpicture}}	
	\caption{Examples of $D_n$}
	\label{fig:first_doilies}
\end{figure}

The set $D_n$ is the subset of elements of $S_n$ that are 
not divisible by $1+i$.  Like the $S_n$ and $B_n$, the sets $D_n$ are nested, and they are closed under
both complex conjugation and multiplication by units.

\begin{lemma}\label{snowflakes_and_doilies} For $n \geq 1$, 
$S_n = D_n \cup (1+i)D_{n-1}.$
\end{lemma}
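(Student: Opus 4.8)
The plan is to prove both inclusions directly, splitting an element $x+yi \in S_n$ according to whether it is divisible by $1+i$ (equivalently, whether $x+y$ is even). Since $S_n$ forbids $2 \mid \gcd(x,y)$, an element of $S_n$ that \emph{is} divisible by $1+i$ must have $x$ and $y$ both odd, whereas one that is not divisible by $1+i$ has $x+y$ odd. The two computations I will use repeatedly are the identities $(1+i)(u+vi) = (u-v)+(u+v)i$ and, conversely, when $x,y$ are both odd, $x+yi = (1+i)(u+vi)$ with $u = \tfrac{x+y}{2}$ and $v = \tfrac{y-x}{2}$ in $\Z$, so that $u-v = x$ and $u+v = y$; together with the elementary fact $|a+b|+|a-b| = 2\max(|a|,|b|)$ and the recursion $w_{n+1} = 2w_{n-1}$.

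For the inclusion $D_n \cup (1+i)D_{n-1} \subseteq S_n$: the containment $D_n \subseteq S_n$ is immediate, since the size conditions defining the two sets coincide and $2 \nmid (x+y)$ forces both $2 \nmid \gcd(x,y)$ and $x+yi \neq 0$. For $(1+i)D_{n-1} \subseteq S_n$, I would take $u+vi \in D_{n-1}$ and put $x+yi = (1+i)(u+vi) = (u-v)+(u+v)i$; then $y = u+v$ is odd and so is $x = u-v$, hence $\gcd(x,y)$ is odd, while $|x|,|y| \le |u|+|v| \le w_n-3$ and $|x|+|y| = 2\max(|u|,|v|) \le 2(w_{n-1}-2) = w_{n+1}-4$. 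Both bounds sit inside the definition of $S_n$.

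For the reverse inclusion $S_n \subseteq D_n \cup (1+i)D_{n-1}$: given $x+yi \in S_n$, if $x+y$ is odd then $x+yi \in D_n$ and we are done; otherwise $x,y$ are both odd, and I write $x+yi = (1+i)(u+vi)$ with $u = \tfrac{x+y}{2}$, $v = \tfrac{y-x}{2}$, so $u+v = y$ is odd. From $|u|,|v| \le \tfrac{|x|+|y|}{2} \le \tfrac{w_{n+1}-3}{2} = w_{n-1}-\tfrac32$ and integrality I get $|u|,|v| \le w_{n-1}-2$; and $|u|+|v| = \max(|x|,|y|)$, which is odd, whereas $w_n-2$ is even for $n \ge 1$, so $\max(|x|,|y|) \le w_n-2$ forces $\max(|x|,|y|) \le w_n-3$. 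Hence $u+vi \in D_{n-1}$ and $x+yi \in (1+i)D_{n-1}$.

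The only step that is not pure bookkeeping — and the one I would flag explicitly — is this last parity argument: the bound $|u|+|v| \le w_n-3$ needed for $D_{n-1}$ is \emph{not} inherited from the $w_{n+1}-3$ bound of $S_n$, but is extracted from the $w_n-2$ bound using that a sum of two odd coordinates cannot equal the even number $w_n-2$. This is also exactly where the hypothesis $n \ge 1$ is used, since $w_0-2 = 1$ is odd and the argument degenerates at $n=0$ (consistent with $S_0 = D_0$ and no $(1+i)D_{-1}$ term).
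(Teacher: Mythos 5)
Your proof is correct and follows essentially the same route as the paper's: both partition $S_n$ according to divisibility by $1+i$, verify the defining bounds under multiplication and division by $1+i$, and hinge on the same key parity observation that the relevant sum of absolute values (your $\max(|x|,|y|)$, the paper's $a$) is odd while $w_n-2$ is even, tightening the bound to $w_n-3$. The only cosmetic difference is that you use the identity $|u+v|+|u-v|=2\max(|u|,|v|)$ where the paper instead reduces without loss of generality to one octant.
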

\begin{proof} As $(1+i) | (x+yi)$ if and only if $2 | (x+y)$, it is clear that $\{x+yi \in S_n: (1+i) \nmid (x+yi) \} = D_n$.  To complete the proof, we must show that 
$\{x +yi \in S_n:(1+i) | (x+yi) \} = (1+i) D_{n-1}.$
We do so by showing containment in both directions.

Suppose $a+bi \in D_{n-1}$, noting that $(1+i) \nmid (a+bi)$.   
We can assume without loss of generality that $w_{n-1} - 2 \geq a > b \geq 0$.  
Note that 
$(1+i)(a+bi) = (a-b) +(a+b) i$, so that 
$0 < a-b \leq a+b \leq w_n -3<w_n -2$ and
\[ 0 < (a-b) + (a+b) = 2a \leq 2(w_{n-1} - 2) = w_{n+1} - 4 < w_{n+1} -3.\]
As $2 \nmid (a+b)$, we see $2 \nmid \gcd(a-b, a+b)$, and therefore infer that $(a+bi)(1+i) \in S_n$.

Now suppose $a+bi \in S_n \setminus 0$, with $(1+i) | (a+bi)$.  We can assume without loss of generality that 
$a \geq b \geq 0$ and $a >0$.  Then $\frac{a+bi}{1+i} = \frac{(a+bi)(1-i)}{2} = \frac{a+b}{2} + \frac{b-a}{2} i$. 
As $(1+i) |(a+bi)$, we know that $2 | (a+b)$ and thus $0 \leq a+b \leq w_{n+1} - 4$, implying that 
$0 \leq \left | \frac{b-a}{2} \right | \leq \frac{a + b}{2} \leq w_{n-1} - 2$.
We can also see that 
$\left | \frac{a+b}{2} \right | + \left | \frac{b-a}{2} \right | = \frac{a+b + a -b}{2} = a \leq w_n -2$.
By definition, 
$2 \nmid \gcd(a,b)$, so the condition $2 | (a+b) $ means that $a$ must be odd.
Hence $\frac{a+b}{2} + \frac{b-a}{2}$ is odd and 
$\left | \frac{a+b}{2} \right | + \left | \frac{b-a}{2} \right | \leq w_n -3$, 
so $\frac{a+b}{2} + \frac{b-a}{2} i \in D_{n-1}$, allowing us to conclude that $a+bi \in (1+i) D_{n-1}$.
\end{proof}

\begin{example}  Figure \ref{fig:snow_and_doilies} shows that $S_3$ is the union of $D_3$ (in light grey) and $(1+i)D_2$ (in black).
\end{example}
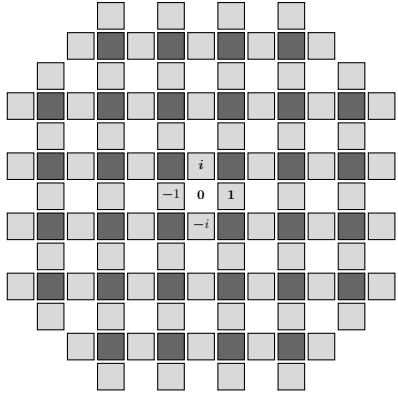
\begin{figure}[ht] \centering

	\begin{tikzpicture} [scale=.5, transform shape]
		
		\foreach \y in {-5,-3,-1, 1,3,5}
		\node[square]  at (0,.8*\y) {};
		
		\foreach \y in {-3,...,3}
		\node[square]  at (.8,1.6*\y) {};
		
		\foreach \y in {-3,...,3}
		\node[square]  at (-.8,1.6*\y) {};
		
		\foreach \y in {-5,-3,-1, 1,3,5}
		\node[square]  at (1.6,.8*\y) {};
		
		\foreach \y in {-5,-3,-1, 1,3,5}
		\node[square]  at (-1.6,.8*\y) {};
		
		\foreach \y in {-3,...,3}
		\node[square]  at (2.4,1.6*\y) {};
		
		\foreach \y in {-3,...,3}
		\node[square]  at (-2.4,1.6*\y) {};
		
		\foreach \y in {-5,-3,-1, 1,3,5}
		\node[square]  at (3.2,.8*\y) {};
		
		\foreach \y in {-5,-3,-1, 1,3,5}
		\node[square]  at (-3.2,.8*\y) {};
		
		\foreach \y in {-2,...,2}
		\node[square]  at (4,1.6*\y) {};
		
		\foreach \y in {-2,...,2}
		\node[square]  at (-4,1.6*\y) {};
		
		\foreach \y in {-3,-1, 1,3}
		\node[square]  at (-4.8,.8*\y) {};
		
		\foreach \y in {-3,-1, 1,3}
		\node[square]  at (4.8,.8*\y) {};
		
		\foreach \y in {-5,-3, -1, 1, 3, 5}
		\node[squareddd]  at (.8,.8*\y) {};
		
		\foreach \y in {-5,-3, -1, 1, 3, 5}
		\node[squareddd]  at (-.8,.8*\y) {};
		
		\foreach \y in {-5,-3, -1, 1, 3, 5}
		\node[squareddd]  at (2.4,.8*\y) {};
		
		\foreach \y in {-5,-3, -1, 1, 3, 5}
		\node[squareddd]  at (-2.4,.8*\y) {};
		
		\foreach \y in {-3, -1, 1, 3}
		\node[squareddd]  at (4,.8*\y) {};
		
		\foreach \y in {-3, -1, 1, 3}
		\node[squareddd]  at (-4,.8*\y) {};

		\node [circle,minimum size=1cm] at (0,.4) {$\bm 0 $};
		\node [circle,minimum size=1cm] at (.8,.4) {$\bm 1 $};
		\node [circle,minimum size=1cm] at (-.8,.4) {$\bm -1 $};
		\node [circle,minimum size=1cm] at (0,1.2) {$\bm i $};
		\node [circle,minimum size=1cm] at (0,-.4) {$\bm -i $};

	\end{tikzpicture}
	\caption{$S _3 = D_3 \cup (1+i) D_2$}
	\label{fig:snow_and_doilies}
\end{figure}

Lemma \ref{snowflakes_and_doilies} allows us to restate Theorem \ref{mainresult} in terms of the $D_n$, rather than the $S_n$.

\begin{theorem} \label{doily_mainresult}   The set $B_n\setminus 0$ is a disjoint union of multiples of sets $D_n$, where
\[B_n = \coprod_{j=0}^n (1+i)^j D_{n-j}.\]
\end{theorem}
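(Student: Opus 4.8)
The plan is to derive this from Theorem~\ref{mainresult} by rewriting each snowflake $S_{n-2j}$ that appears in the decomposition of $B_n\setminus 0$ in terms of the doilies, using Lemma~\ref{snowflakes_and_doilies}. Since every element $x+yi\in D_m$ has $x+y$ odd, hence $x+y\neq 0$ and $x+yi\neq 0$, none of the sets $(1+i)^jD_{n-j}$ contains $0$; so it is enough to prove the set equality $B_n\setminus 0=\coprod_{j=0}^{n}(1+i)^jD_{n-j}$, which is the content of the theorem.

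First I would record two elementary facts. Because $2=-i(1+i)^2$ is a unit times $(1+i)^2$ and each $D_m$ is closed under multiplication by units, we have $2^jD_m=(1+i)^{2j}D_m$ for all $j,m\ge 0$. Secondly, the union in Lemma~\ref{snowflakes_and_doilies} is in fact \emph{disjoint}: $D_n$ is precisely the set of elements of $S_n$ not divisible by $1+i$, while $(1+i)D_{n-1}$ consists of elements divisible by $1+i$, so $S_n=D_n\sqcup(1+i)D_{n-1}$ for $n\ge 1$, and $S_0=D_0$.

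Next I would substitute into Theorem~\ref{mainresult}:
\[
B_n\setminus 0=\coprod_{j=0}^{\lfloor n/2\rfloor}2^jS_{n-2j}
=\coprod_{j=0}^{\lfloor n/2\rfloor}\Bigl(2^jD_{n-2j}\ \sqcup\ 2^j(1+i)D_{n-2j-1}\Bigr),
\]
with the convention that the second summand is empty when $n-2j=0$ (the case $S_0=D_0$, where Lemma~\ref{snowflakes_and_doilies} does not apply). Applying $2^jD_{n-2j}=(1+i)^{2j}D_{n-2j}$ and $2^j(1+i)D_{n-2j-1}=(1+i)^{2j+1}D_{n-2j-1}$, every piece on the right is of the form $(1+i)^\ell D_{n-\ell}$. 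As $j$ runs from $0$ to $\lfloor n/2\rfloor$, the exponents $2j$ sweep out the even integers in $\{0,1,\dots,n\}$ and the exponents $2j+1$ sweep out the odd integers in $\{0,1,\dots,n\}$ (the empty-summand convention deletes exactly the would-be term $2j+1=n+1$ that arises when $n$ is even); together these hit each $\ell\in\{0,1,\dots,n\}$ exactly once. Reindexing the disjoint union by $\ell$ gives $B_n\setminus 0=\coprod_{\ell=0}^{n}(1+i)^\ell D_{n-\ell}$.

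The only real work is the bookkeeping in that last step: one must verify, for both parities of $n$, that the pairs $(2j,\,2j+1)$ enumerate $\{0,1,\dots,n\}$ without repetition or overrun, taking care at the boundary $n-2j=0$ that no $D_{-1}$ term is spuriously introduced, and noting that reindexing a disjoint union along a bijection of its index set preserves disjointness. Everything else follows immediately from the two facts above together with Theorem~\ref{mainresult} and Lemma~\ref{snowflakes_and_doilies}.
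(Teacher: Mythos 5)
Your proposal is correct and follows essentially the same route as the paper's proof: apply Theorem~\ref{mainresult}, split each $2^jS_{n-2j}$ via Lemma~\ref{snowflakes_and_doilies}, use that $2$ is a unit times $(1+i)^2$ to rewrite $2^jD_m$ as $(1+i)^{2j}D_m$, and handle the $S_0=D_0$ boundary case. You simply make explicit the bookkeeping (disjointness, the reindexing of exponents, and that $0$ lies in no $D_m$) that the paper leaves to the reader.
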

\begin{proof}  Theorem \ref{mainresult} states that $B_n \setminus 0 = \bigcup_{j=0}^{\lfloor n/2 \rfloor} 2^j S_{n-2j}$.  
If $n =2j$, then $2^j S_0 = (1+i)^{2j} D_0$. 
As $2^j S_{n-2j} = (1+i)^{2j} D_{n-2j} \cup (1+i)^{2j+1} D_{n - (2j +1)}$ when $n - 2j \geq 0$,
the rest follows.
\end{proof}
 
\begin{example} In Figure \ref{fig:B_doily_union}, $B_3$ is the union of $D_3$ (in lightest grey), $(1+i)D_2$ (in medium grey), $(1+i)^2 D_1$ (in hatched grey), and $(1+i)^3 D_0$ (in black).
\end{example}
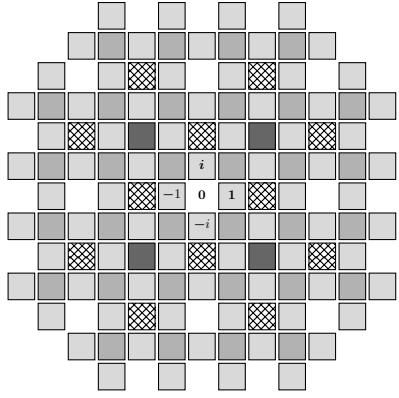
\begin{figure}[ht] \centering

	\begin{tikzpicture} [scale=.5, transform shape]
		
		\foreach \y in {-5,-3,-1, 1,3,5}
		\node[square]  at (0,.8*\y) {};
		
		\foreach \y in {-3,...,3}
		\node[square]  at (.8,1.6*\y) {};
		
		\foreach \y in {-3,...,3}
		\node[square]  at (-.8,1.6*\y) {};
		
		\foreach \y in {-5,-3,-1, 1,3,5}
		\node[square]  at (1.6,.8*\y) {};
		
		\foreach \y in {-5,-3,-1, 1,3,5}
		\node[square]  at (-1.6,.8*\y) {};
		
		\foreach \y in {-3,...,3}
		\node[square]  at (2.4,1.6*\y) {};
		
		\foreach \y in {-3,...,3}
		\node[square]  at (-2.4,1.6*\y) {};
		
		\foreach \y in {-5,-3,-1, 1,3,5}
		\node[square]  at (3.2,.8*\y) {};
		
		\foreach \y in {-5,-3,-1, 1,3,5}
		\node[square]  at (-3.2,.8*\y) {};
		
		\foreach \y in {-2,...,2}
		\node[square]  at (4,1.6*\y) {};
		
		\foreach \y in {-2,...,2}
		\node[square]  at (-4,1.6*\y) {};
		
		\foreach \y in {-3,-1, 1,3}
		\node[square]  at (-4.8,.8*\y) {};
		
		\foreach \y in {-3,-1, 1,3}
		\node[square]  at (4.8,.8*\y) {};
		
		\foreach \y in {-5,-3, -1, 1, 3, 5}
		\node[squared]  at (.8,.8*\y) {};
		
		\foreach \y in {-5,-3, -1, 1, 3, 5}
		\node[squared]  at (-.8,.8*\y) {};
		
		\foreach \y in {-5,-3, -1, 1, 3, 5}
		\node[squared]  at (2.4,.8*\y) {};
		
		\foreach \y in {-5,-3, -1, 1, 3, 5}
		\node[squared]  at (-2.4,.8*\y) {};
		
		\foreach \y in {-3, -1, 1, 3}
		\node[squared]  at (4,.8*\y) {};
		
		\foreach \y in {-3, -1, 1, 3}
		\node[squared]  at (-4,.8*\y) {};

		\foreach \y in {-2,0,2}
		\node[squaredd]  at (1.6,1.6*\y) {}; 
		
		\foreach \y in {-2,0,2}
		\node[squaredd]  at (-1.6,1.6*\y) {}; 
		
		\foreach \y in {-1,1}
		\node[squaredd]  at (-3.2,1.6*\y) {}; 
		
		\foreach \y in {-1,1}
		\node[squaredd]  at (3.2,1.6*\y) {}; 
		
		\foreach \y in {-1,1}
		\node[squaredd]  at (0,1.6*\y) {};

		\node[squareddd]  at (1.6,1.6) {}; 
		\node[squareddd]  at (1.6,-1.6) {};
		\node[squareddd]  at (-1.6,1.6) {};
		\node[squareddd]  at (-1.6,-1.6) {};

		\node [circle,minimum size=1cm] at (0,.4) {$\bm 0 $};
		\node [circle,minimum size=1cm] at (.8,.4) {$\bm 1 $};
		\node [circle,minimum size=1cm] at (-.8,.4) {$\bm -1 $};
		\node [circle,minimum size=1cm] at (0,1.2) {$\bm i $};
		\node [circle,minimum size=1cm] at (0,-.4) {$\bm -i $};

	\end{tikzpicture}
	\caption{$B _3 = D_3 \cup (1+i) D_2 \cup 2 D_1 \cup 2(1+i)D_0$}
	\label{fig:B_doily_union}
\end{figure}

\section{Computing the minimal Euclidean function on the Gaussian integers}\label{how_to}

Lenstra's Theorem tells us that 
\[
\phi_{\Z[i]}^{-1}(n) = \phi_{\Z[i]}^{-1}([0,n]) \setminus \phi_{\Z[i]}^{-1}([0,n-1])
=B_n \setminus B_{n-1}.\]
Therefore, to compute $\phi_{\Z[i]}(a + bi)$ for $a + bi \in \Z[i]\setminus 0$, 
we need to find the least value of $n$ such that $a + bi \in B_n$.  
We do this using Theorem \ref{mainresult}.  
Note $\phi_{\Z[i]}(\pm a \pm bi) = \phi_{\Z[i]}(\pm b \pm ai)$ as the sets $B_n$ are closed under complex conjugation and multiplication by units.

We can now prove Theorem \ref{alg_theo}.
\begin{proof}   
If $\frac{|a| + |b|}{2^j} \leq w_{m+1} - 3$, then $a + bi \in 2^j ( S_m \setminus S_{m-1})$ and $a + bi \in B_{m+2j} \setminus B_{m + 2j -1}$ by 
Theorem \ref{mainresult}.  
Suppose that $\frac{|a| + |b|}{2^j} > w_{m+1} - 3$ and $a + bi \notin 2^j S_m$
Our assumption $a\geq b \geq 0$ tells us that 
$\frac{|a| + |b|}{2^j} \leq 2 (w_m -2) \leq w_{m +2} - 4$, so $a+bi \in 2^j (S_{m+1}\setminus  S_m)$.  We deduce that $a + bi \in B_{m + 2j+1} \setminus B_{m + 2j}$ by Theorem \ref{mainresult}.
\end{proof}

Theorem \ref{alg_theo} gives us a clear algorithm to compute $\phi_{\Z[i]}(a+bi)$ for any $a +bi \in \Z[i]\setminus 0$, as $\phi_{\Z[i]}(a+bi) = \phi_{\Z[i]} (\max(|a|, |b|) + \min(|a|, |b|)i )$.
It runs rapidly on computers, as they all run on binary arithmetic.  
We can use the bitwise operations $\&$ and $>>$ to quickly find the greatest power of $2$ that divides any non-zero integer $x$. 
We outline the algorithm below.

\begin{enumerate}
\item Given $\alpha +\beta i$, let $a = \max (|\alpha|, |\beta|)$, $b = \min (|\alpha|, |\beta|)$.  \\
\item 
Let $j$ be the integer satisfying $2^j \parallel (a,b)$.\\
\item  
Let $p = \lfloor \log_2 | \frac{a}{2^j} + 2 | \rfloor.$ \\
When $p =1$:\\
\begin{itemize}
\item If $b =0$, then $m = 0$.\\
\item If $b =2^j$, then $M =1$.\\
\end{itemize}
When $p \geq 2$:\\
\begin{itemize}
\item If $(\frac{a}{2^j} + 2) = 2^p$, let $m = 2p - 3$.\\
\item If $2^p < (\frac{a}{2^j} + 2) \leq 2^p + 2^{p-1}$, then let $m = 2p - 2$.\\
\item If $ 2^p + 2^{p-1} < (\frac{a}{2^j} + 2) < 2^{p+1}$, then let $m = 2p - 1$.\\
\end{itemize}
\item If $\frac{a + b}{2^J} +3  \leq w_{m+1}$, then $\phi_{\Z[i]}(\alpha+\beta i) = m + 2j$.  Otherwise, $\phi_{\Z[i]}(\alpha+\beta i) = m + 2j +1$.
\end{enumerate}

\begin{example}\label{computation}  Let $a + bi = 90 + 44i$.  Note that $a + bi = 2(45 + 22i)$, so $J = 1$.  Then $\frac{A}{2^J} = 45$ and $32 < 45 + 2 < 48$, so $P=5$ and $M=8$.
Next, we see that $45 + 22 + 3 = 70 >64 = w_9$, so $\phi(90 + 44i) = 8 + 2 + 1 =11.$
\end{example}

\section{Finding minimal $(1 +i)$-ary expansions}\label{finding_expansions}

Section \ref{how_to} gives an algorithm to find the length of a minimal $(1+i)$-ary expansion of $a +bi$, $\phi_{\Z[i]}(a + bi)$.  
We now use $\phi_{\Z[i]}$ to then find one of those minimal expansions.
 The naive way to find a minimal expansion $\sum_{j=0}^n u_j (1 +i)^j, u_j \in \{0, \pm 1, \pm i \}$ of $a+bi$ would be to compute the four values
 $\phi_{\Z[i]} (a + bi - w(1 +i)^n)$ for $w \in \{\pm 1, \pm i\}$.  
 For at least one of the $w \in \{ \pm 1, \pm i \}$, $\phi_{\Z[i]} (a + bi - w(1 +i)^n) < n$.  We set $u_n = w$ for one of the obliging values of $w$, 
 and then repeat with the difference $a + bi - w(1 +i)^n$. 
  With a little forethought, however, we don't have to compute all four differences.  
  Lemmas \ref{even_expansion} and \ref{odd_expansion} below
  show that if we check the signs of 
 $a$ and $b$ and compare their magnitudes, a coefficient $u_n$ presents itself.  After Lemma \ref{odd_expansion}, we present an example applying this technique.

\begin{lemma}\label{even_expansion}
If $a + bi \in \Z[i] \setminus 0$ and $\phi_{\Z[i]}(a+bi) = 2k$, then there exists a $(1 +i)$-ary expansion of $a+bi$ of length $2k+1$ where 
\[u_{2k}(1 +i)^{2k} =  
\begin{cases}
2^k & \text{if } 0 \leq |b| \leq a \\
- 2^k & \text{if }  0 \leq |b| \leq -a \\
-2^k i & \text{if }  0 \leq |a| \leq -b \\
2^k i & \text{if } 0 \leq |a| \leq b
\end{cases} 
   .\]
\end{lemma}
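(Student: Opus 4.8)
The plan is to exploit the symmetries of the sets $B_n$ (closure under complex conjugation and multiplication by units) to reduce all four cases of the lemma to a single one, and then to show directly that subtracting $2^k$ from $a+bi$ lands us in $B_{2k-1}$. First I would observe that since $\phi_{\Z[i]}(\pm a \pm bi) = \phi_{\Z[i]}(\pm b \pm ai)$, it suffices to handle the case $0 \le |b| \le a$, where we claim $u_{2k} = 2^k$ works; the other three cases follow by applying a unit $u \in \{-1, i, -i\}$ and noting that $u \cdot 2^k(1+i)^{2k} = u \cdot 2^k$ realizes the corresponding leading term. So the whole lemma collapses to: if $\phi_{\Z[i]}(a+bi) = 2k$ and $0 \le |b| \le a$, then $(a - 2^k) + bi \in B_{2k-1}$, since then $a+bi = 2^k + [(a-2^k)+bi]$ with the bracketed term of $\phi$-value at most $2k-1$, giving an expansion of length $2k+1$ with leading term $2^k = 2^k(1+i)^{2k}$.

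Next I would unpack $\phi_{\Z[i]}(a+bi) = 2k$ via Theorem \ref{mainresult} and Theorem \ref{alg_theo}. Writing $2^j \parallel (a,b)$, the hypothesis forces $2k = m + 2j$ with $m$ even, say $m = 2\ell$ and $j = k - \ell$, and the case split in Theorem \ref{alg_theo} gives $\tfrac{a+b}{2^j} \le w_{m+1} - 3$, i.e. $a + bi \in 2^j(S_{m}\setminus S_{m-1})$, hence $a+bi \in 2^j S_{2\ell}$ with $2^j \parallel (a,b)$. By Theorem \ref{mainresult} (or directly Theorem \ref{doily_mainresult}) this places $a+bi$ inside the "octagonal snowflake" layer $2^j S_{2\ell}$, so after dividing by $2^j$ we have $\tfrac{a}{2^j} + \tfrac{b}{2^j} i \in S_{2\ell}$. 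At this point the argument is essentially the one already carried out inside the even case of the proof of Theorem \ref{mainresult}: there it is shown that for $x + yi \in S_{2\ell}$ with $x \ge y \ge 0$, the element $(x - 2^\ell) + yi$ lies in $S_{2\ell - 1} \subset B_{2\ell - 1}$. I would apply that computation to $x = \tfrac{a}{2^j}$, $y = \tfrac{b}{2^j}$, obtaining $(\tfrac{a}{2^j} - 2^\ell) + \tfrac{b}{2^j} i \in B_{2\ell - 1}$, and then multiply back by $2^j = (1+i)^{2j} \cdot i^{-j}$-style scaling — more precisely, using that $2^j$ times an element of $B_{2\ell-1}$ lies in $B_{2\ell - 1 + 2j} = B_{2k - 1}$ — to conclude $(a - 2^j 2^\ell) + bi = (a - 2^k) + bi \in B_{2k-1}$, as $j + \ell = k$.

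The one wrinkle is that the hypothesis only gives $0 \le |b| \le a$, i.e. $b$ may be negative, whereas the computation in Theorem \ref{mainresult} assumed $x \ge y \ge 0$; I would absorb this by first applying complex conjugation (a symmetry of all the $B_n$ and $S_n$) to reduce to $b \ge 0$, noting that conjugation fixes the real leading term $2^k$. I expect the main obstacle to be bookkeeping rather than conceptual: making sure the reduction $m + 2j = 2k$ with $m$ even is forced (one must rule out $m$ odd, which follows because $\phi_{\Z[i]}(a+bi)$ being even pins down the case of Theorem \ref{alg_theo}), and correctly tracking how multiplication by $2^j$ shifts membership from $B_{2\ell-1}$ up to $B_{2k-1}$ — this uses the remark, stated just before Theorem \ref{mainresult}, that multiplying by $2^j$ sends $B_n$ into $B_{n+2j}$. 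Everything else is a direct citation of the inequalities already verified in the even case of the proof of Theorem \ref{mainresult}.
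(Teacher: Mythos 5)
Your proposal is correct and follows essentially the same route as the paper: reduce by the unit/conjugation symmetries to the case $a \ge |b| \ge 0$, use the decomposition of $B_{2k}\setminus B_{2k-1}$ to place $a+bi$ in $2^jS_{2(k-j)}\setminus 2^jS_{2(k-j)-1}$, deduce $a\ge 2^k$, and verify that $(a-2^k)+bi$ lies in $2^jS_{2(k-j)-1}\subset B_{2k-1}$. The only cosmetic difference is that the paper re-runs the relevant inequalities directly with $|b|$ in place of $b$, whereas you conjugate first and cite the computation already done in the even case of Theorem \ref{mainresult}; both are sound.
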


\begin{proof} Let us suppose that $a \geq |b| \geq 0$ and that $2^j \parallel (a,b)$, with $0 \leq j \leq k$, so that $a+bi \in 2^j S_{2(k-j)}$.

If $k=j$, then $\frac{a}{2^j} + \frac{b}{2^j} i \in S_0$, and $a +bi = 2^k$, so $(a - 2^k) + bi =0$ and $u_{2k}(1+i)^{2k} = 2^k$.

Now suppose that $0 \leq j <k$.  As $\frac{2|b|}{2^j} \leq \frac{|a| + |b|}{2^j} \leq w_{2(k-j) +1} - 3$, we see that $\frac{|b|}{2^j} \leq w_{2(k-j) -1} - 2$.  
The element $a + bi \notin 2^j S_{2(k-j) -1}$, 
so either 
\[\frac{|a|}{2^j} > w_{2(k-j) -1} - 2 \geq w_{2(k-j-1)} = 2^{k-j} + 2^{k-j-1} > 2^{k-j},\]
or $\frac{|a| + |b|}{2^j} > w_{2(k-j)} - 3$.  In the second scenario,
\begin{align*}
\frac{|a|}{2^j} &> w_{2(k-j)} - 3 - \frac{|b|}{2^j} \geq w_{2(k-j)} - 3 + 2 - w_{2(k-j-1)},\\
\intertext{so}
\frac{|a|}{2^j} &\geq w_{2(k-j)} - w_{2(k-j-1)} = 2^{k-j}.
\end{align*}
In both situations, $a \geq 2^k$, so  $0 <  \frac{a - 2^k}{2^j} \leq w_{2(k-j) -1} -2$ and $0 < \frac{a + |b| - 2^k}{2^j} \leq w_{2(k-j)} - 3$.  
The assumption $j < k$ means that $2^j \parallel (a - 2^k, b)$,
so $(a-2^k) + bi \in 2^j S_{2(k-j)-1} \subset B_{2k -1}$,  and we can write $a +bi$ as $2^k$ plus some element of $B_{2k-1}$.  

If $0 \leq |b| \leq -a$, then $-(a+bi)$ is in the analyzed situation, so $-(a+bi) +2^k \in B_{2k -1}$, 
and thus $(a + bi) \in -2^k + B_{2k-1}$.  The other two claims follow from analogous reasoning.
\end{proof}

\begin{lemma}\label{odd_expansion}
If $a + bi \in \Z[i] \setminus 0$ and $\phi(a+bi) = 2k +1$, then there exists a $(1 +i)$-ary expansion of $a+bi$ of length $2k+2$, where 
\[u_{2k+1}(1 +i)^{2k+1} = 
\begin{cases}
2^k (1 +i) & \text{if } a, b \geq 0 \\
- 2^k(1 + i) & \text{if }  a,b \leq 0\\
2^k (1 -i) & \text{if }  a \geq 0, b \leq 0 \\
-2^k (1 -i) & \text{if } a \leq 0, b \geq 0
\end{cases} 
   .\]
\end{lemma}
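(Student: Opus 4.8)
The plan is to reduce everything to a single core claim: \emph{if $a \ge b \ge 0$ and $\phi_{\Z[i]}(a+bi) = 2k+1$, then $(a-2^k)+(b-2^k)i \in B_{2k}$.} Granting this, write $a+bi = 2^k(1+i) + \big[(a-2^k)+(b-2^k)i\big]$; since $2 = -i(1+i)^2$ one has $2^k(1+i) = (-i)^k(1+i)^{2k+1}$, so the leading term equals $u_{2k+1}(1+i)^{2k+1}$ with $u_{2k+1} = (-i)^k \in \{0,\pm 1,\pm i\}$, and appending this digit to any length-$(2k+1)$ expansion of $(a-2^k)+(b-2^k)i \in B_{2k}$ gives the required length-$(2k+2)$ expansion. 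The restriction $a \ge b$ rather than just $a,b \ge 0$ costs nothing, because $a+bi \mapsto b+ai = i\,\overline{a+bi}$ is a conjugation-and-unit symmetry under which $B_{2k}$, $B_{2k+1}$, and the value $2^k(1+i)$ are all invariant. The remaining three sign patterns follow by applying the core claim to $-(a+bi)$, to $\overline{a+bi}$, and to $-\overline{a+bi}$ and transferring the conclusion back: each such symmetry multiplies $2^k(1+i)$ by a unit, and the four resulting values are exactly $\pm 2^k(1+i)$ and $\pm 2^k(1-i)$, as listed.

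To prove the core claim I would invoke Theorem \ref{mainresult}. Since $\phi_{\Z[i]}(a+bi)=2k+1$ gives $a+bi \in B_{2k+1}\setminus B_{2k}$, and since $2^j \parallel (a,b)$ for a unique $j$ with $0 \le j \le k$, the fact that each piece $2^m S_\bullet$ consists of Gaussian integers with $2^m$ exactly dividing their gcd forces $a+bi \in 2^j S_{2(k-j)+1}$ while $a+bi \notin 2^j S_{2(k-j)}$. The case $j=k$ is immediate: $\tfrac{a}{2^k}+\tfrac{b}{2^k}i \in S_1\setminus S_0$ with $\tfrac{a}{2^k}\ge \tfrac{b}{2^k}\ge 0$ leaves only $1+i$ and $2+i$, giving difference $0$ or $2^k \in 2^k S_0 \subset B_{2k}$.

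For the main case $0 \le j < k$, set $l = k-j \ge 1$, $A = a/2^j$, $B = b/2^j$. From $a+bi \in 2^j S_{2l+1}$ I get $A,B \le w_{2l+1}-2$ and $A+B \le w_{2l+2}-3$, while $a+bi \notin 2^j S_{2l}$ forces $A > w_{2l}-2$ or $A+B > w_{2l+1}-3$; exactly as in the proof of Lemma \ref{even_expansion}, either alternative yields $A \ge 2^l$, hence $a \ge 2^k$ and $0 \le A-2^l \le w_{2l}-2$. I would then check $(A-2^l)+(B-2^l)i \in S_{2l}$: the bound on $|B-2^l|$ is immediate, and $|A-2^l|+|B-2^l| \le w_{2l+1}-3$ I would split into $B \ge 2^l$ (the sum is $A+B-2^{l+1}$), $0 < B < 2^l$ (the sum is $A-B \le A-1$), and $B=0$ (the sum is $A$, which is odd while $w_{2l+1}-2$ is even). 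Since $l \ge 1$, $2^l$ is even, so $2\nmid\gcd(A,B)$ forces $2\nmid\gcd(A-2^l,B-2^l)$. Hence $(a-2^k)+(b-2^k)i = 2^j\big((A-2^l)+(B-2^l)i\big) \in 2^j S_{2l} \subset B_{2k}$, proving the claim.

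The main obstacle is the off-by-one in $|A-2^l|+|B-2^l| \le w_{2l+1}-3$ when $B=0$ or $0<B<2^l$: the crude estimate only gives $\le w_{2l+1}-2$, and one must use, respectively, the oddness of $A$ against the evenness of $w_{2l+1}-2$ and the inequality $B \ge 1$ — precisely the parity trick from the proof of Theorem \ref{mainresult}. Keeping the gcd condition intact under the translation by $2^k(1+i)$ is the only other delicate point; the rest is bookkeeping with $w_{n+2}=2w_n$ together with $w_{2l+1}-3 = 2(w_{2l-1}-2)+1$ and $w_{2l+2} = w_{2l+1}+2^{l+1}$.
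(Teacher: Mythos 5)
Your proposal is correct and follows essentially the same route as the paper's proof: use Theorem \ref{mainresult} to place $a+bi$ in $2^jS_{2(k-j)+1}\setminus 2^jS_{2(k-j)}$, show the translate by $2^k(1+i)$ lands in $2^jS_{2(k-j)}\subset B_{2k}$ (with $j=k$ handled separately), and obtain the other sign patterns from associates and conjugates. Your case split on $B$ versus $2^l$ is in fact slightly more complete than the paper's, which leaves the sub-case $0<\min(a,b)<2^k$ implicit.
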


\begin{proof}  Suppose that $a,b \geq 0$ and that $2^j \parallel (a,b)$, with $0 \leq j \leq k$, so that $a+bi$ is in the first quadrant of $2^j S_{2(k-j) +1}$.  
If $j =k$, then $\frac{a}{2^k} + \frac{b}{2^k} i \in S_1 \setminus S_0$, 
and $a +bi \in 2^k \{1 +i, 2 +i, 1 +2i\} $, revealing that $(a -2^k) + (b-2^k) i \in  \{0, 2^k, 2^ki \} \subset B_{2k}$.

If $0 \leq j <k$, then 
$0 \leq \frac{a}{2^j},  \frac{b}{2^j}  \leq w_{2(k-j) + 1} -2,$
and 
\[\left | \frac{a - 2^k}{2^j} \right |, \left | \frac{b - 2^k}{2^j} \right |  \leq \max \{ 2^{k-j}, w_{2(k-j) + 1} - 2 - 2^{k-j} \}
 \leq w_{2(k-j)} - 2.\]
 If $a, b \geq 2^k$, then 
\[
\frac{|a - 2^k| + |b - 2^k|}{2^j} = \frac{a + b - 2^{k+1}}{2^j} \leq w_{2(k-j) + 2} - 3 - 2^{(k-j) +1} 
= w_{2(k-j) + 1} - 3.\]
As $a + bi \notin 2^j S_{2(k-j)}$, we cannot have  both $a, b \leq 2^k$.  Suppose exactly one of the coordinates is $> 2^k$, so that 
$|a - 2^k| + |b-2^k| = \max (a,b) - \min(a,b)$.  If $\min (a,b) =0$, then $2 \nmid \frac{\max(a,b)}{2^j}$, so 
$\frac{\max(a,b) - \min(a,b)}{2^j} \leq w_{2(k-j) +1} -3.$
We conclude that $(a - 2^k) + (b-2^k) \in 2^j S_{2(k-j)} \subset B_{2k}$.

As in Lemma \ref{even_expansion}, the other cases come from looking at associates of $a +bi$.

\end{proof}  

\begin{example}  We return to Example \ref{computation}.  We saw that $\phi_{\Z[i]}(90 + 44i) =11$, so Lemma \ref{odd_expansion} tells us that $u_{11}(1 +i)^{11} = 32(1+i)$.
Then:\\
\begin{itemize}
\item $\phi_{\Z[i]} (58 + 12i) = 9$, so $u_9(1 +i)^9 = 16(1 +i)$\\  
\item $\phi_{\Z[i]} (42 - 4i) = 8$, so $u_8(1 +i)^8 = 16$  \\
\item $\phi_{\Z[i]} (26 - 4i) = 7$, so $u_7(1 +i)^7 = 8(1 -i)$\\
\item $\phi_{\Z[i]}(18 + 4i) = 6$, so $u_6(1+i)^6 = 8$\\
\item $\phi_{\Z[i]}(10 + 4i) = 5$, so $u_5(1 +i)^5 =4(1 +i)$\\
\item $\phi_{\Z[i]}(6) =4$, so $u_4 (1 +i)^4 = 4$\\
\item $u_2(1+i)^2 = 2$.  
\end{itemize}
We can then write 
\[90 + 44i = -i(1+i)^{11}  + (1 +i)^9 + (1+i)^8 + (1 +i)^7 +i(1 +i)^6 -i(1+i)^5 - (1 +i)^4 -i(1 +i)^2,\]
where $u_{11} = u_5=  u_2= -i$, $u _9 =u_8= u_7= 1$, $u_6 =i$, $u_4 = -1$, and $u_{10} =u_3 = u_1 =0.$
\end{example}

\section{Further Work}

In \cite{Graves}, I use the geometry of the $B_n$ to give a new proof of Lenstra's theorem in $\Z[i]$.  
Lindsey-Kay Lauderdale, Ryan Keck, and I have another paper exploring
the minimal Euclidean function on the Eisenstein integers, but we do not have an explicit algorithm, as in the Gaussians.  
Tom Edgar and I have forthcoming research on alternate expansions of Gaussian integers.

\section*{Acknowledgements}
I would like to thank my very patient spouse, Loren LaLonde, 
who has listened to me talk about this problem for the last fifteen years, and who has ensured that my LaTeX always compiled.

I first encountered this problem while working on my Ph.D. thesis fifteen years ago, while under Nick Ramsey's kind supervision.  
In the intervening years, I sustained a traumatic brain injury.  
I would like to thank Dr. Mary Lee Esty, whose neurotherapy returned my 
cognitive capacity to engage in research mathematics.  
The final writing occurred during the pandemic.  I would like to thank the Thurgood Marshall Childhood Development Center, and especially 
Ms. Joemese Malloy, for taking care of my child with love, and enabling me to concentrate on mathematics.

I greatly appreciate the help from Michael Bridgland, who not only read Martin Fuchs' thesis for me (I don't speak German), 
but who also asked a question that led to other work. 
I extended my heartfelt thanks to Katie Ahrens, Chris Bishop, and Jon Grantham for their useful suggestions on 
previous drafts. 

Lastly, I would like to thank both H.W. Lenstra, Jr.  and Franz Lemmermeyer for their help with this paper's background research and literature review, as well as their kind personal encouragement.    
I highly recommend Lemmermeyer's survey, ``The Euclidean Algorithm in Algebraic Number Fields,'' to anyone interested in the subject \cite{Lemmermeyer}.

\end{document}